\newtheorem{theorem}{Theorem}[section]
\newtheorem{lemma}[theorem]{Claim}
\title{\LARGE Modeling the 15 Tile Puzzle Through the Lens of Group Theory}	
\author[]{Viren Khandal}
\affil[]{University of California, Berkeley}
\begin{abstract}
In this piece, we examine one variant of the infamous 15 Tile Puzzle and develop a mathematical backing behind why it is unsolvable. Using concepts of permutations, bijectivity, and cycle transpositions, we not only prove how to model this puzzle as a \textit{group}, but also how to determine if a certain configuration of the puzzle is reachable.
\end{abstract}
\begin{document}
\maketitle
\ifthenelse{\boolean{shortarticle}}{\ifthenelse{\boolean{singlecolumn}}{\abscontentformatted}{\abscontent}}{}
\dropcap{T}he 15 tile puzzle has been a prevalent brain teaser in the realm of mathematics for several decades. A new variant, invented in the mid-1870s by Sam Lloyd, was known to \textit{drive the world crazy} as many deemed it impossible to solve. In fact, even though there was a \$1000 prize was rewarded to any solvers, no one claimed the prize, stating that the puzzle was unsolvable.

The below Figure 1 shows an example of the starting position in Lloyd's 15 tile puzzle. The board consists of a 4 by 4 grid, in which each cell but one contains a tile, represented by number ranging from 1 to 15. The goal of the puzzle can be seen as shifting the tiles in an orderly fashion to achieve the goal state, which is displayed in Figure 2. Tiles can be moved vertically and horizontally into the empty space in order to reach the aforementioned goal state.

\begin{figure}[h]
    \centering
    \label{fig:one}
    \includegraphics[width=.25\textwidth]{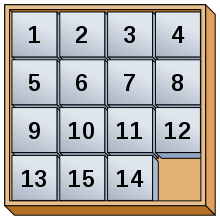}
    \caption{An example of Sam Lloyd's 15 slide puzzle.}
    \centering
    \label{fig:two}
    \includegraphics[width=.25\textwidth]{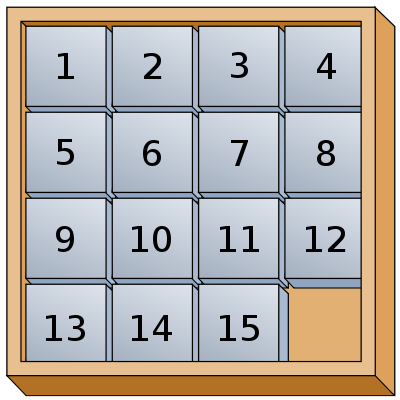}
    \caption{An example of the solved 15 slide puzzle.}
\end{figure}


\section*{Discussion}
In this section, we discuss the connection of the 15 Tile Puzzle to some of the concepts covered in the MATH 113: Introduction to Abstract Algebra course at the University of California, Berkeley.
\\

\noindent
\textit{Note: In this works, the words $mapping$ and $permutation$ are used interchangeably.}
\\\\
We can model the placement of tiles on the 4x4 grid using a mapping C such that:
\begin{equation}
\label{eq:c}
    C: \{1, 2, ..., 16\} \xrightarrow{} \{1, 2, ..., 16\}
\end{equation}

\noindent
Here, C maps each position on the grid labelled 1 through 16 to a tile labelled 1 to 16, where 16 is the empty tile. Using this mapping C, we can rewrite a configuration of the below shown board, $A$, as a permutation in two line and cycle notation:
\begin{figure}[htpb]
    \centering
    \includegraphics[width=.25\textwidth]{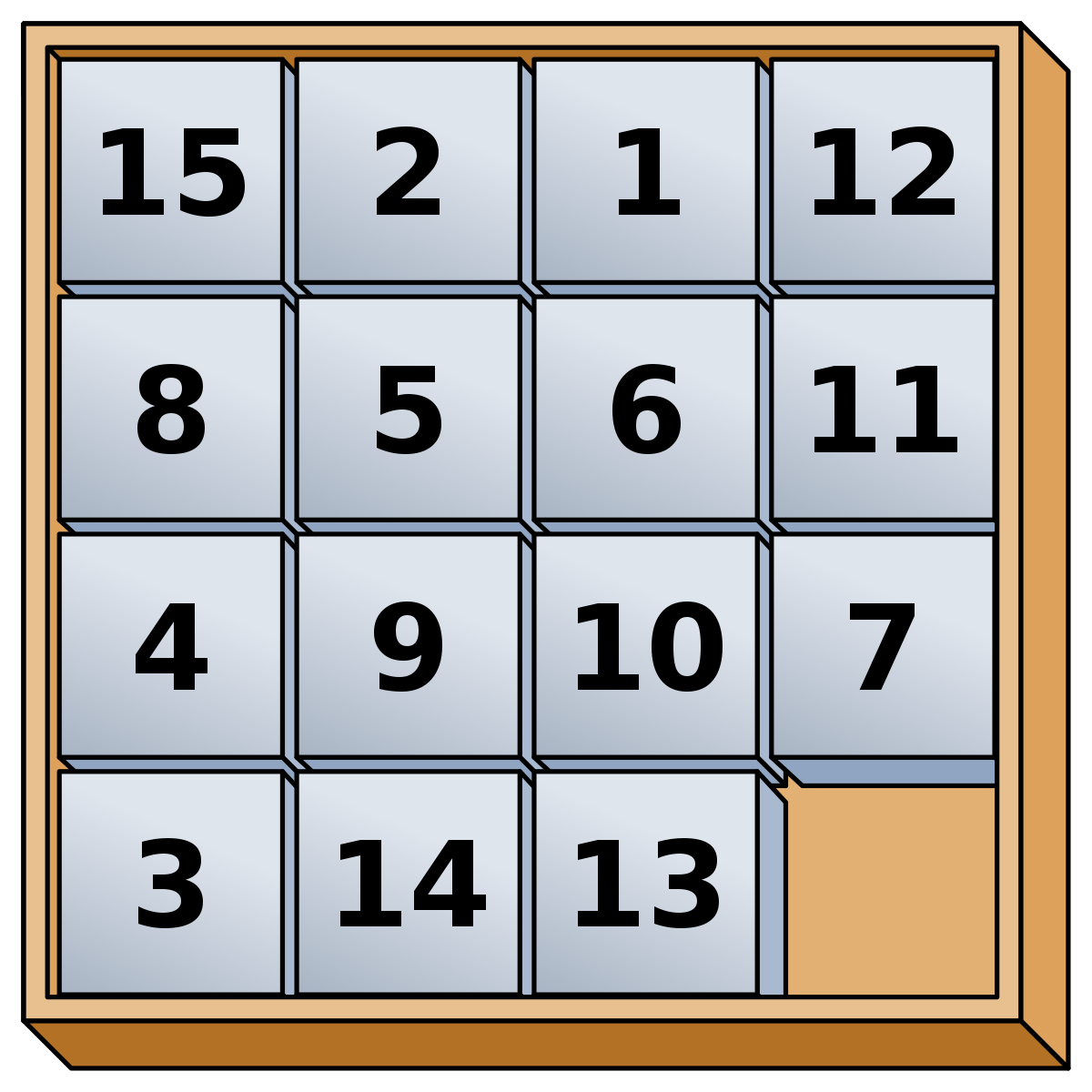}
    \caption{\textbf{A}: An example of a randomly configured board.}
    \label{fig:three}
\end{figure}
\begin{equation}
\begin{aligned}
\label{eq:permutation}
  A =
  \begin{pmatrix}
    {1 \atop 3} \; {2 \atop 2} \; {3 \atop 13} \; {4 \atop 9} \; {5 \atop 6}\; {6 \atop 7}\; {7 \atop 12}\; {8 \atop 5}\; {9 \atop 10}\; {10 \atop 11}\; {11 \atop 8}\; {12 \atop 4}\; {13 \atop 15}\; {14 \atop 14}\; {15 \atop 1}\; {16 \atop 16}
  \end{pmatrix} \\
  A = (1 \; 3 \; 13 \; 15)\;(2)\;(4 \; 9 \; 10 \; 11 \; 8 \; 5 \; 6 \; 7 \; 12)\;(14)\;(16)
\end{aligned}
\end{equation}

\noindent
Further, we can model the movement of tiles as a mapping $M_1$  such that:
\begin{equation}
\label{eq:m}
    M_1: \{1, 2, ..., 16\} \xrightarrow{} \{1, 2, ..., 16\}
\end{equation}

\noindent
Here, $M_1$ maps each tile labelled 1 through 16, where 16 is the empty tile, to a position on the grid labelled 1 through 16. The set of all such mappings, $M$, is a group under composition of moves\footnote{Trappa, Peter (2004) \emph{Permutations and the 15-Puzzle}, University of Utah, Mathematics.}.
\\

\noindent
Using some configuration of the board, $C$, and a permutation of moves, $M$, we can calculate the configuration of the board, $C'$, after $M$ is performed on $C$:

\begin{equation}
\label{eq:update}
  C' = M * C
\end{equation}

\noindent
Starting with some configuration C, we can aggregate a series of moves $M_1, M_2, ..., M_n$ and formulate a new configuration C' using:

\begin{equation}
    \label{eq:moves}
    C' = M_n*...*M_2*M_1*C
\end{equation}

\noindent
One key detail to note is that moving a tile into the position occupied by the empty square has the effect of moving the empty square one spot along to the former location of the tile that was moved.
Thus far, we have formulated both M and C as permutations in $S_{16}$. Phrased in terms of material from MATH 113, the moves performed on a certain configuration is equivalent to defining it as a bijection, which is an element of $S_{16}$, on the configuration.

\begin{lemma}
\label{thm:bij}
Performing a move on a certain configuration of the 15 Tile Puzzle is a bijection on the set of configurations and thus the elements of M are also elements of $S_{16}$.
\end{lemma}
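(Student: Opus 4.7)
The plan is to reduce the lemma to showing that every move, interpreted as a map $\{1,\dots,16\} \to \{1,\dots,16\}$, is a bijection, after which membership in $S_{16}$ is immediate from the definition of the symmetric group as the group of bijections on a $16$-element set. Since the domain is finite, I would focus on verifying injectivity, as injectivity, surjectivity, and bijectivity coincide for self-maps of a finite set.

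First I would handle a single elementary move, namely sliding one tile into the adjacent empty square. As the paper itself notes, such a move alters only the positions of two labels---the empty tile and the tile being slid---and fixes the other fourteen. Viewed as a map on $\{1,\dots,16\}$, this is precisely the transposition of those two labels, which is manifestly its own inverse and hence a bijection.

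Next I would extend to an arbitrary element of $M$ by appealing to equation~(\ref{eq:moves}): every element arises as a finite composition $M_n \cdots M_1$ of elementary moves. Since the set of bijections on a finite set is closed under composition, every such composite is a bijection on $\{1,\dots,16\}$, and therefore lies in $S_{16}$. This establishes the second half of the lemma.

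The main obstacle I anticipate is not mathematical depth but rather notational clarity: the excerpt uses $M$ in at least three distinct senses (a single move, a generic element of the group of moves, and the group itself), and the phrase ``bijection on the set of configurations'' is conflated with ``element of $S_{16}$,'' though strictly speaking the two differ. A careful proof needs to fix the convention---say, that $M$ acts on the $16$ board positions---and then separately argue that for such an $M$, the map $C \mapsto M * C$ furnishes the advertised bijection on the set of configurations, using that $M$ is invertible in $S_{16}$ so that left multiplication by $M$ has the two-sided inverse $C \mapsto M^{-1} * C$.
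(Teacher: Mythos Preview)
Your argument is correct, but it proceeds along a different axis than the paper's own proof. The paper takes as given (via a citation) that $M$ is already a group under composition, and then for a fixed $M_1 \in M$ proves that $\varphi_{M_1}\colon C \mapsto M_1 * C$ is a bijection on configurations by invoking the existence of $M_1^{-1}$ and left-cancelling: from $M_1 * C_1 = M_1 * C_3$ it multiplies on the left by $M_1^{-1}$ to get $C_1 = C_3$, and surjectivity follows similarly. In other words, the paper deduces bijectivity from invertibility in the ambient group. You instead establish bijectivity from the ground up: an elementary move is a transposition of two labels, transpositions are self-inverse, and finite composites of bijections are bijections, so every element of $M$ lies in $S_{16}$; only afterward do you recover the ``bijection on configurations'' statement via left multiplication. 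Your route is more self-contained---it does not lean on the external claim that $M$ is a group, which is mildly circular since knowing $M$ is a group of self-maps already entails bijectivity---and it has the added payoff of explicitly identifying elementary moves as transpositions, precisely the fact the paper needs later for the parity argument. The paper's route is shorter once the group structure is granted, and it emphasizes the abstract left-cancellation mechanism rather than the combinatorics of the puzzle. Your closing remarks about the conflation of ``bijection on configurations'' with ``element of $S_{16}$'' are well taken; the paper does not separate these two claims, and your proposal to treat $M$ as acting on positions and then derive the action on configurations by left multiplication is the cleaner formulation.
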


\begin{proof}
To prove a certain operation is bijective, we must first prove it is, both, injective (one-to-one) and surjective (onto). 
\\

\noindent
We begin by proving the injectivity of the operation.
Assume $C_1$ and $C_3$ to be configurations of the puzzle such that $C_1$ $\neq$ $C_3$ and let $\varphi_{M_1}$ be a function that performs move $M_1 \in M$, such that $\varphi_{M_1}(C_1)$ = $\varphi_{M_1}(C_3)$.
In order for $\varphi_{M_1}$ to be injective, we must show that $\varphi_{M_1}(C_1)$ = $\varphi_{M_1}(C_3)$ implies $C_1$ = $C_3$. We know there exists a $M_1^{-1}$ because $M$ is a group, as previously defined.
\begin{equation}
\begin{aligned}
    \varphi_{M_1}(C_1) = M_1 * C_1 = M_1 * C_3 = \varphi_{M_1}(C_3)\\
\label{eq:equal}
    M_1^{-1} * M_1 * C_1 = M_1^{-1} * M_1 * C_3
\end{aligned}
\end{equation}

\noindent
Therefore by \eqref{eq:equal}, it must be that $C_1$ = $C_3$ and so $\varphi_{M_1}$ is injective, or one-to-one.
\\

\noindent
Now that we've shown this mapping is well-defined and injective, we can proceed to proving this operation is surjective, or onto. The proof of surjectivity is trivial because we have already considered $M$ to be a group, meaning that there exists some move $M_1^{-1}$ that can be performed on $\varphi_{M_1}(C_1)$ to reach $C_1$; and so, $\varphi_{M_1}$ is surjective.
\\
\end{proof}

\noindent
We will now transition into explaining why Lloyd's puzzle is truly unsolvable.

\begin{lemma}
Lloyd's 15 Tile Puzzle is unsolvable
\end{lemma}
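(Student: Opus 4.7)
\noindent
The plan is to combine two parity arguments: one counting moves, and one tracking permutation sign. First, I would observe that every legal move swaps the blank tile (labelled $16$) with one of its orthogonally adjacent neighbours, so viewed as an element of $S_{16}$ each move is a single transposition, and hence an odd permutation. Claim~\ref{thm:bij} ensures that each move, and hence any composition of moves, lives in $S_{16}$, so the sign homomorphism from $S_{16}$ onto $\{\pm 1\}$ is available. A sequence of $k$ legal moves therefore produces a permutation whose parity equals the parity of $k$.

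\noindent
Next I would colour the $4 \times 4$ grid as a checkerboard. Each move shifts the blank into an adjacent square, which is necessarily of the opposite colour, so each move flips the colour of the square occupied by the blank. In both Lloyd's starting layout and the goal layout of Figure~2, the blank sits in the bottom-right corner, which forces the blank to return to its original colour in any sequence that transforms one into the other. Consequently the total number of moves $k$ must be even, and so the overall permutation applied to the tiles must itself be an even element of $S_{16}$.

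\noindent
Finally I would compare the two configurations directly. In Lloyd's starting layout, tiles $1$ through $13$ and tile $16$ sit in their goal positions while tiles $14$ and $15$ are interchanged, so the permutation carrying the solved configuration to Lloyd's configuration is the transposition $(14\;15)$, which is odd. Since an odd permutation cannot be expressed as a product of an even number of transpositions, no sequence of legal moves can convert Lloyd's starting position into the goal state.

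\noindent
I expect the second step, the checkerboard colouring, to be the main conceptual obstacle, since it is the place where the planar geometry of the puzzle must be translated into an algebraic constraint on the parity of the move count. The other two steps are then essentially routine: an appeal to the sign homomorphism to convert that move-count parity into a permutation-sign parity, and a direct reading of the single transposition that separates Lloyd's board from the solved board.
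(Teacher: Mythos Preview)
Your argument is correct and follows essentially the same parity-contradiction route as the paper: both identify the required permutation as the odd transposition $(14\;15)$, note that each legal move is a single transposition, and then argue that the blank's return to the bottom-right corner forces the total number of moves to be even. The only difference lies in that last step---the paper counts the blank's up/down and left/right moves separately (each pair summing to an even number since the blank returns to its exact starting square), whereas you reach the same conclusion via a checkerboard colouring.
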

\begin{proof}
If we simplify the moves needed to reach from our starting configuration (Fig 1) to our goal state (Fig 2) can be simply written as $(14 \; 15)$, since we just need to swap the 14 and 15 tile.
\\\\
\noindent
We can use our implementation of the mapping $M$ \eqref{eq:m} to rewrite $(14 \; 15)$:
\begin{equation}
    \label{eq:permutation}
    (14 \; 15) = M_n * ... * M_2 * M_1
\end{equation}

\noindent
Because the empty tile stays in the same location in the goal state as the ending state, we can claim that tile 16 moves up and down an equal number of times and left and right an equal number of times. This means that the number of transpositions on the right side of \eqref{eq:permutation} is even. However, the left side of our equation has an odd number of transpositions. 
\\\\
\noindent
This means we have a contradiction in that we are equating an even number of transpositions (left side of the equation) to an even number of transpositions (right side of the equation). This contradictory proof shows that it is impossible to reach the goal state (Fig 2) from our starting state (Fig 1).
\\
\end{proof}

\noindent
As such, we can conclude, through rigorous mathematical backing, that Sam Lloyd's variation of the 15 Tile Puzzle was indeed unsolvable. By initially developing a mappings/permutations of configurations and moves on the 4x4 grid to eventually grounding the moves as a series of transpositions, we have developed the mathematical intuition behind Lloyd's Puzzle and why it is unsolvable.
\\

\noindent
Further connections to abstract algebra topics can be found in other variants of the 15 Tile Puzzle, in which we start with a random configuration and we can use similar strategies to not only determine if the puzzle is solvable given that current state, but also develop a unique trajectory or sequence of moves that will allow us to go from our random configuration (i.e. Fig \ref{fig:three}) to the goal state (Fig 2).


\section*{Acknowledgement}
We thank \textbf{Professor Christopher Ryba} and the Mathematics Department of the University of California, Berkeley for providing the adequate mathematical background on these topics to allow for this deeper investigation.


\end{document}